\newcommand{\real}{\ensuremath{\mathbb{R}}}
\newcommand{\Ac}{\mathcal{A}}
\newcommand{\Fc}{\mathcal{F}}
\newcommand{\Sc}{\mathcal{S}}
\newcommand{\Tc}{\mathcal{T}}
\newcommand{\Xc}{\mathcal{X}}
\theoremstyle{plain}
\newtheorem{theorem}{Theorem}[section]
\newtheorem{lemma}[theorem]{Lemma}
\newtheorem{definition}[theorem]{Definition}
\newcommand{\longthmtitle}[1]{\mbox{}{\bf \textit{(#1).}}}
\newcommand{\ones}{\mathbf{1}}
\DeclareMathOperator*{\argmin}{argmin}
\begin{document}

\title{Zero-Knowledge Proof-Based Approach for Verifying the Computational Integrity of Power Grid Controls
\thanks{This work was authored in part by the National Renewable Energy Laboratory (NREL), operated by Alliance for Sustainable Energy, LLC, for the U.S. Department of Energy (DOE) under Contract No. DE-AC36-08GO28308. This work was supported by the Laboratory Directed Research and Development (LDRD) Program at NREL. The views expressed in the article do not necessarily represent the views of the DOE or the U.S. Government. The U.S. Government retains and the publisher, by accepting the article for publication, acknowledges that the U.S. Government retains a nonexclusive, paid-up, irrevocable, worldwide license to publish or reproduce the published form of this work, or allow others to do so, for U.S. Government purposes.}
}

\author{\IEEEauthorblockN{ Chin-Yao Chang, \quad Richard Macwan, \quad Sinnott Murphy}
\IEEEauthorblockA{
%\textit{dept. name of organization (of Aff.)} \\
\textit{National Renewable Energy Laboratory}, Golden, USA\\
chinyao.chang@nrel.gov, richard.macwan@nrel.gov, sinnott.murphy@nrel.gov}
%\and
%\IEEEauthorblockN{2\textsuperscript{nd} Given Name Surname}
%\IEEEauthorblockA{\textit{dept. name of organization (of Aff.)} \\
%\textit{name of organization (of Aff.)}\\
%City, Country \\
%email address or ORCID}
}
\maketitle
\begin{abstract}
    The control of future power grids is migrating from a centralized to a distributed/decentralized scheme to enable a massive penetration of distributed energy resources and bring extreme enhancements of autonomous operations in terms of grid resilience, security, and reliability. Most effort has been on the design of distributed/decentralized controllers; however, the guarantees of the proper execution of the controls are also essential but relatively less emphasized. A common assumption is that local controllers would fully follow the designated controller dynamics based on the data received from communication channels. Such an assumption could be risky because proper execution of the controller dynamics is then built on trust in secure communication and computation. On the other hand, it is impractical for a verifier to repeat all the computations involved in the controls to verify the computational integrity. In this work, we leverage a type of cryptography technology, known as zero-knowledge scalable transparent arguments of knowledge to verify the computational integrity of control algorithms, such that verifiers can check the computational integrity with much less computational burden. The method presented here converts the challenge of data integrity into a subset of computational integrity. In this proof-of-concept paper, our focus will be on projected linear dynamics that are commonly seen in distributed/decentralized power system controllers. In particular, we have derived polynomial conditions in the context of zk-STARKs for the projected linear dynamics. 
\end{abstract}

\begin{IEEEkeywords}
Cybersecurity, cryptography, gradient control algorithms, power system control, zk-STARK
\end{IEEEkeywords}

\section{Introduction}

As the energy grid of today is rapidly evolving into a complex and highly distributed cyber-physical system, as illustrated in Figure~\ref{Control_Levels}, new control algorithms and approaches are being developed for the reliable and efficient operation of the grid. These control algorithms have also played a huge role in the rapid integration of Distributed Energy Resources (DERs), Electric Vehicles (EVs), and smart loads into the grid. Control algorithms such as \cite{zhang2013aggregated,koch2011modeling,utkarsh2020model} provide grid services using behind-the-meter DERs and home smart loads, \cite{WALD2022118753,yan2018optimized} leverage smart building controls and EV charging infrastructure to support reliable grid operations. Whether these control approaches span from local home-or building-level control to the substation-or system-level control, most of these approaches are highly dependent on the cyber or the communication systems for the exchange of measurements and control commands to control the physical energy system. Hence, these control algorithms form a key layer of interaction between the cyber and the physical layers of the energy system. An attack on the cyber layer-such as malicious data injection, or denial-of-service attacks that cause data drops or delays can have a huge impact on the accurate execution at the control layer and can be translated into large-scale physical system disruption. To protect the system against such threats, the traditional cybersecurity mechanisms for energy systems have primarily focused on either developing better encryption mechanisms for supervisory control and data acquisition (SCADA) protocols such as secure Distributed Network Protocol 3 (DNP3)~\cite{majdalawieh2007dnpsec,amoah2016securing} and secure Modbus~\cite{fovino2009design}, or on developing novel intrusion detection techniques for the communication network~\cite{yang2014multiattribute}; however, unlike these traditional cybersecurity mechanisms, this work focuses on injecting security into the control layer of the grid by proposing a verification framework using zero-knowledge proof. 

\begin{figure*}[h]
\centerline{\includegraphics[width=1.0\linewidth]{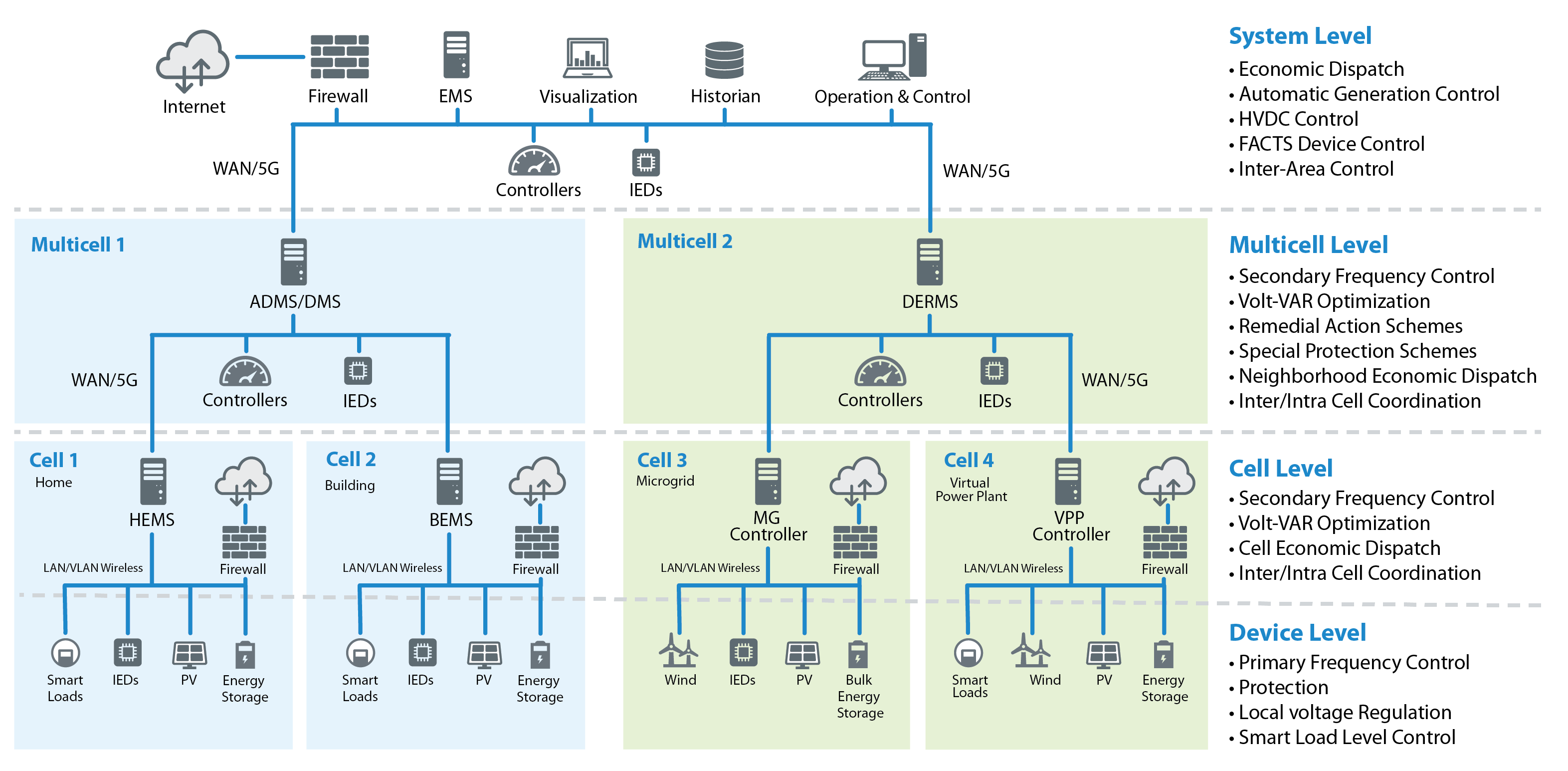}}
\caption{Energy cyber-physical system architecture and controls at various levels }
\label{Control_Levels}
\end{figure*}

A zero knowledge proof (ZKP) is a method by which one sender device (the prover) can prove to a receiver device (the verifier) that a given statement is true without conveying any additional information about the underlying statement. In the context of power grid control, a receiving device for a voltage set point value, for example, resulting from a control algorithm computation performed on a sender device, will also receive a zero-knowledge proof of the correct computation with respect to the set point value. Because no knowledge about the underlying computation is being transmitted as part of the proof it is impossible to recreate a proof of computation by a malicious device on the network. Hence, by focusing on verifying the computational integrity for control algorithms, this novel concept converts the challenge of traditional data integrity into a subproblem under computational integrity and helps both prevent and detect attacks at the control layer. Although there are many ZKP approaches, such as zero-knowledge succinct noninteractive arguments for knowledge (zk-SNARK)~\cite{6956581} and bulletproofs~\cite{8418611}, in this work, we leverage the zero knowledge succinct transparent arguments of knowledge (zk-STARK) scheme for verifying the computational integrity of the power grid control algorithm. Compared to other ZKP schemes, zk-STARK is scalable, fast, requires no trusted setup, and is plausibly post-quantum secure and hence is chosen for this study. A detailed comparison of different ZKP schemes can be found in~\cite{9758531,ZKSTARK-SNARK-1,ZKSTARK-SNARK-2}. We envision that zk-STARK might help address the cybersecurity challenges described. Leveraging zk-STARK to a general controller dynamics is probably too ambitious for the first step; therefore, we focus on projected linear controller dynamics, which are commonly seen not only in grid control~\cite{nahata2017decentralized,chang2019saddle,jiao2020decentralized} but also robotic~\cite{bullo2009distributed}; and wireless networks~\cite{ram2009distributed}. 

\textit{Contributions:} 
In this work, we leverage zk-STARK methodologies to verify the accurate execution of projected linear dynamics that are commonly seen in gradient-based control algorithms. The current form of zk-STARK cannot directly fit with the control dynamics mainly because the projection operators involved in the controller cannot be written in polynomial constraints. We address the issue by proposing a two-stage verification framework. The framework involves some reformulation of the original projected linear dynamics for convenience of separating the verification process into the online and offline stages. The online stage involved only some logical checks that can be done in real time for the verifier. The offline stage involves verification on more complicated computations, which is where zk-STARK comes into play. The numerical studies provide a good proof of concept of the proposed framework.  
 
The reminder of the paper is organized as follows. Section~\ref{sec:prelim} goes through some preliminary concepts. Section~\ref{sec:veri} shows how a class of projected linear dynamics is reformulated into polynomial constraints that fit in zk-STARK framework. Section~\ref{sec:num} uses a simple numerical example for the purpose of proof of concept.

\section{Preliminary}\label{sec:prelim}
\textit{Notations}: Let $\mathbb{R}$ be a set of real numbers; the superscript of $\mathbb{R}$ (if exists) denotes the dimension. For a scalar,  $x\in\mathbb{R}$, $\lfloor x \rfloor$ is the flooring value of $x$. For a vector, $x\in\mathbb{R}^n$, $x_i$ is referred as the $i^{th}$ element of $x$. Given a closed set, $\Xc$, we define a projection operator as $\text{Proj}_{\Xc}(x_0):= \argmin_{x} \|x - x_0\|$. The degree of a polynomial $f$ is denoted as $\text{deg}(f)$. A finite (Galois) field with a prime power, $q^k$, is denoted as $\Fc_{q^k}$, where $q$ is a prime number, and $k$ is a positive integer. $\Fc_{q^k}^*:= \Fc_{q^k}\setminus \{0\}$ is the multiplicative group of $\Fc_{q^k}$. 

\subsection{Finite fields}
Here we list some concepts of finite fields that are very relevant to zk-STARK. Comprehensive materials for finite fields can be found in~\cite{grove2012algebra}, and a more compact overview of the relevant concepts is available in~\cite{IntroFiniteField}.
\begin{definition}\longthmtitle{Prime fields}
For every prime number, $q$, the set $\Fc_{q} = \{0, 1, \cdots, q-1\}$ forms a field under mod$-q$ addition and multiplication.
\end{definition}

\begin{theorem}\longthmtitle{Cyclic groups~\cite{IntroFiniteField}}
The elements of a cyclic group $G$ of order $n$ with generator $g$ are $\{g^0,g^1,\cdots,g^{n-1}\}$. The multiplication rule is $g^i * g^j = g^{(i+j) \text{mod } n}$, the identity is $g^0=1$, and the inverse of $g^i \not=1$ is $g^{n-i}$.
\end{theorem}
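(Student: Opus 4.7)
The plan is to invoke the definition of a cyclic group of order $n$ with generator $g$, namely that $g^n$ equals the identity element $e$ and $n$ is the least positive integer with this property, so that every element of $G$ is expressible as an integer power of $g$. From there, each claim (enumeration of elements, multiplication rule, identity, inverse) follows by straightforward manipulation of exponents modulo $n$.

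First I would verify that the $n$ elements $g^0, g^1, \dots, g^{n-1}$ are pairwise distinct. Suppose $g^i = g^j$ with $0 \le i < j \le n-1$; then $g^{j-i} = e$ with $0 < j-i < n$, contradicting the minimality of $n$ as the order of $g$. Since $G$ contains exactly $n$ elements and we have exhibited $n$ distinct ones, we conclude $G = \{g^0, g^1, \dots, g^{n-1}\}$.

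Next, for the multiplication rule, I would start from the standard exponent identity $g^i \cdot g^j = g^{i+j}$ (inherited from associativity of the group operation). Writing $i+j = \ell n + r$ with $0 \le r < n$ by the division algorithm, I compute
\begin{equation*}
g^{i+j} \;=\; g^{\ell n + r} \;=\; (g^n)^\ell \cdot g^r \;=\; e^\ell \cdot g^r \;=\; g^{(i+j)\,\mathrm{mod}\, n},
\end{equation*}
which gives the stated rule. The identity claim $g^0 = 1$ is immediate from $g^0 = e$, and for the inverse of any $g^i$ with $1 \le i \le n-1$, I would simply check
\begin{equation*}
g^i \cdot g^{n-i} \;=\; g^{n} \;=\; e,
\end{equation*}
so $g^{n-i}$ is the two-sided inverse (two-sided because $G$ is abelian under powers of a single generator, or equivalently because a one-sided inverse in a finite group is automatically two-sided).

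There is no real obstacle here; the argument is essentially bookkeeping built on the defining property that $n$ is the smallest positive integer with $g^n = e$. The only mild subtlety to watch for is making the distinctness argument airtight, since everything else in the statement is a direct consequence of reducing exponents modulo $n$.
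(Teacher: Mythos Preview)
Your proposal is correct and is the standard elementary argument. The paper does not supply its own proof of this theorem; it is quoted as a preliminary fact with a citation to~\cite{IntroFiniteField}, so there is nothing to compare against beyond noting that your reasoning is exactly the textbook derivation one would find in that reference.
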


\begin{theorem}\longthmtitle{Cyclic subgroup~\cite{grove2012algebra}}
If $G$ is a finite subgroup of the multiplicative group, $\Fc_{q^k}^*$, of a field $\Fc_{q^k}$, then $G$ is cyclic. 
\end{theorem}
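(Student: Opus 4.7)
The plan is to prove this by counting the number of elements of each possible order in $G$ and combining that count with the key field-theoretic fact that a nonzero polynomial of degree $d$ over $\Fc_{q^k}$ has at most $d$ roots. First I would set $n = |G|$ and, for each positive divisor $d$ of $n$, define $\psi(d)$ to be the number of elements of $G$ of order exactly $d$. Since $G$ is a finite group, Lagrange's theorem gives the partition $n = \sum_{d \mid n} \psi(d)$.

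The heart of the argument is to show that $\psi(d) \in \{0, \phi(d)\}$, where $\phi$ is Euler's totient. Suppose some $g \in G$ has order exactly $d$. Then the $d$ distinct elements of the cyclic subgroup $\langle g\rangle$ all satisfy $x^d = 1$ in the field $\Fc_{q^k}$. Because $\Fc_{q^k}$ is a field, the polynomial $x^d - 1$ has at most $d$ roots, so $\langle g\rangle$ already accounts for every element of $G$ whose order divides $d$. In particular, every element of $G$ of order exactly $d$ lies in $\langle g\rangle$, and by the standard description of cyclic groups there are exactly $\phi(d)$ such generators. Hence whenever $\psi(d) > 0$, it equals $\phi(d)$.

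Combining the two observations with the classical identity $\sum_{d \mid n} \phi(d) = n$, I would write
\[
n \;=\; \sum_{d \mid n} \psi(d) \;\leq\; \sum_{d \mid n} \phi(d) \;=\; n,
\]
which forces equality $\psi(d) = \phi(d)$ for every divisor $d$ of $n$. Taking $d = n$ gives $\psi(n) = \phi(n) \geq 1$, so $G$ contains an element of order $n$, and therefore $G = \langle g \rangle$ is cyclic.

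The main obstacle is the polynomial-root step: one must rule out having two distinct cyclic subgroups of the same order $d$ inside $G$, and this fails in general groups or in rings with zero divisors. The field structure of $\Fc_{q^k}$ is essential because it is exactly what bounds the number of roots of $x^d - 1$ by $d$, which in turn forces all elements of order dividing $d$ into a single cyclic subgroup. Once that step is in hand, the totient identity and a counting inequality finish the proof with no further work.
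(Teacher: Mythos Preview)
Your argument is correct and is precisely the classical proof: Lagrange plus the root bound for $x^d - 1$ over a field forces $\psi(d)\in\{0,\phi(d)\}$, and then the totient identity $\sum_{d\mid n}\phi(d)=n$ squeezes the inequality into equality, producing an element of order $n$. There is nothing to compare against here, since the paper does not supply its own proof of this preliminary result but simply cites it from the reference; your write-up is a complete and standard justification.
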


\subsection{An overview of zk-STARK}
Figure \ref{zkSTARK} illustrates the steps involved in the zk-STARK framework and is described in detail in \cite{DBLP:journals/iacr/Ben-SassonBHR18}. In the context of power grid control computational verification, the first step is to identify the aspect of control algorithm computation that needs to be verified and can be understood as the computational integrity statement for the purposes of zk-STARK implementation. Similar to other ZKPs, zk-STARK framework aims to verify the computational integrity statement by translating the statement into a polynomial over a finite field. The process of translating a computational integrity statement along with an execution trace for the computation into a low-degree polynomial is referred to as algebraic intermediary representation (AIR). The process of AIR can encompass either a single computation of multiple computations that can be translated into a single low-degree polynomial over a finite field. Finally, during the low complexity verification part of the zk-STARK framework, the verifier aims to verify that the polynomial representing the computation or a set of computations is of sufficiently low degree using random queries to the prover. If the verifier can be convinced through a limited set of random queries to the prover of the low degree of the polynomial over a finite field, then the computation is considered verified. 

\begin{figure}[h]
\centerline{\includegraphics[width=1.0\linewidth]{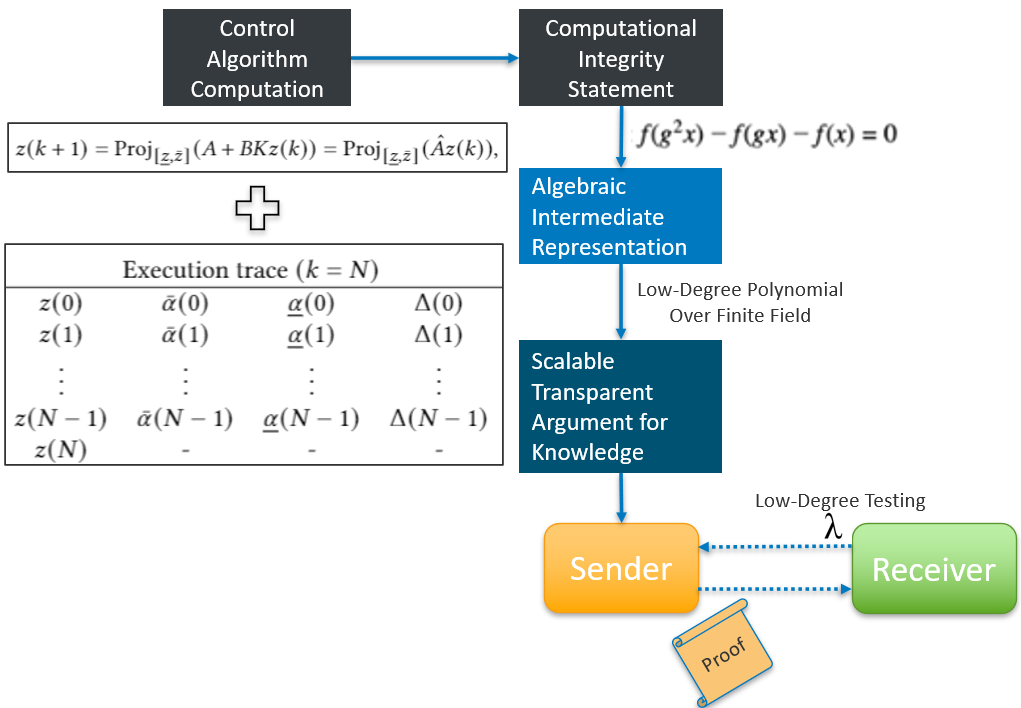}}
\caption{The zk-STARK framework }
\label{zkSTARK}
\end{figure}

\section{Verification of Algorithm Implementation}\label{sec:veri}

We consider a scenario that a controller should obligate the following linear dynamics in controlling system state variable $z$:
\begin{align}\label{eq:control}
    z(k+1) = \text{Proj}_{[\underline{z},\bar{z}]}(A+BK) z(k) = \text{Proj}_{[\underline{z},\bar{z}]}(\hat{A} z(k)),
\end{align}
where $z\in\real^n$, $A \in \real^{n\times n}$, $B\in\real^{n\times m}$, and $K\in\real^{m\times n}$. This state feedback control is commonly seen in various applications. There could be other variants of~\eqref{eq:control} that use the output feedback or the adaptive feedback gain $K$. Because those are not the focus of this work, we assume that the equivalent state-transition matrix, $\hat{A}$, is given in the remainder of the work. The goal of the verifier is to check whether~\eqref{eq:control} is properly executed for the number of steps of interest, $k\in\{0,\cdots,N-1\}$, without repeating the computations in~\eqref{eq:control}. In the context of zk-STARKs, the execution trace is a table that lists all the variables from the beginning to the end, $z(0),\cdots,z(N-1)$ for~\eqref{eq:control}. The polynomial constraints associated with the execution trace are polynomial equations that hold if and only if the designated method of generating the execution trace, equation~\eqref{eq:control}, is correctly executed through every time step from step $0$ to $N-1$. If the polynomial constraints have recurring patterns, then a low computational complexity verification method can be derived. In fact,~\eqref{eq:control} by itself already provides a recurring pattern that holds for all $k\in\{0,\cdots,N-1\}$. The issue is that fundamentally,~\eqref{eq:control} cannot be equivalently rewritten as polynomial equations because of the projection operator. We will address the issue by introducing some slack variables and by developing a two-stage verification framework. Intuitively, the verification of accurate projection operation is done in the first stage (online), and the verification of the remaining computations is done in the second stage (offline) where polynomial constraints are very useful.

\subsection{Equivalent formulations of the projected linear dynamics}
In this section, we show some equivalent formulations of~\eqref{eq:control} that lay the foundation for the two-stage verification framework. We first introduce the slack variables $\bar{\alpha}$ and $\underline{\alpha}$ for all $i\in\{1,\cdots,n\}$, with the meaning of the projection decisions as shown in~\eqref{eq:control_slack}:
\begingroup
\allowdisplaybreaks
\begin{subequations}\label{eq:control_slack}
\begin{align}
    z_i(k+1) &= \bar{\alpha}_i(k)\underline{\alpha}_i(k) \Big(\hat{A}_i z(k) \Big)+ (1 - \bar{\alpha}_i(k) )\bar{z}_i(k)  \label{eq:control_slack-1} \\ \nonumber 
    &+ (1 - \underline{\alpha}_i(k) )\underline{z}_i(k), \quad\quad \forall i=1,\cdots,n  \\
    \Delta_i(k) &= \bar{\alpha}_i(k) \cdot (\bar{z}_i - \hat{A}_i z(k) ) + \underline{\alpha}_i(k) \cdot (\hat{A}_i z(k) - \underline{z}_i), \label{eq:control_slack-2} \\ \nonumber & \hspace{5mm}\forall i=1,\cdots,n  \\
    \bar{\alpha}_i(k) &= \begin{cases}
        1 & \text{ if } \hat{A}_i z(k) \leq \bar{z}_i \\ 0 & \text{ if } \hat{A}_i z(k) > \bar{z}_i
    \end{cases}, \quad\quad \forall i=1,\cdots,n \label{eq:control_slack-3} \\
    \underline{\alpha}_i(k) &= \begin{cases}
        1 & \text{ if }\hat{A}_i  z(k) \geq \underline{z}_i \\ 0 & \text{ if } \hat{A}_i z(k) < \underline{z}_i
    \end{cases}, \quad\quad \forall i=1,\cdots,n \label{eq:control_slack-4} 
\end{align}
\end{subequations}
\endgroup
where $\hat{A}_i$ is the $i^{th}$ row of $\hat{A}$. The variable $\bar{\alpha}_i$ is defined such that it takes the value $1$ if the projection is inactive and $0$ otherwise; $\underline{\alpha}_i$ is defined similarly. For convenience of notation, we write $\bar{\alpha}$, $\underline{\alpha}$ and $\Delta$, respectively, as the concatenation of all the associated variables for all $i = 1,\cdots,n$.  It is straightforward to see that starting from the same $z(0) = z_{ini}$,~the sequence of $z$ generated from \eqref{eq:control} and \eqref{eq:control_slack} are the same. Note that variable $\Delta$ does not affect the dynamics of $z$, $\bar{\alpha}$, and $\underline{\alpha}$. In other words, dropping~\eqref{eq:control_slack-2} does not change the equivalency between~\eqref{eq:control} and~\eqref{eq:control_slack} in the sense of the overlapped trajectory of $z$. Introducing $\Delta$ paves the road for a further reformulation shown in Lemma~\ref{lem:equi_alpha}.

\begin{lemma}\label{lem:equi_alpha}
\longthmtitle{Equivalent formulation of~\eqref{eq:control_slack}}
Equation~\eqref{eq:control_slack} holds if and only if~\eqref{eq:control_slack-1},~\eqref{eq:control_slack-2},  and~\eqref{eq:equi_alpha} hold:
\begin{subequations}\label{eq:equi_alpha}
\begin{align}
    &\bar{\alpha}_i(k) \in\{0,1\}, \quad \underline{\alpha}_i(k) \in\{0,1\}, \quad \forall i = 1,\cdots,n, \label{eq:equi_alpha-1} \\
    & \Delta_i(k) \geq \bar{z}_i - \underline{z}_i, \quad \forall i = 1,\cdots,n,  \label{eq:equi_alpha-2} \\
    & z(k+1) \in [\underline{z},\bar{z}].  \label{eq:equi_alpha-3}
\end{align}
\end{subequations}
\end{lemma}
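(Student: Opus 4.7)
The strategy is a coordinate-wise case analysis on the binary pair $(\bar{\alpha}_i(k),\underline{\alpha}_i(k))$, for every $i$ and $k$. Since \eqref{eq:control_slack-1} and \eqref{eq:control_slack-2} appear on both sides of the biconditional, the essential content of the lemma is to show that the piecewise branch definitions \eqref{eq:control_slack-3}, \eqref{eq:control_slack-4} can be replaced by the purely algebraic constraints \eqref{eq:equi_alpha}, and vice versa. Throughout I assume $\underline{z}_i<\bar{z}_i$; the degenerate case $\underline{z}_i=\bar{z}_i$ trivializes the projection and can be handled separately.

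For the ``$\Rightarrow$'' direction, the binary range \eqref{eq:equi_alpha-1} is immediate from \eqref{eq:control_slack-3}, \eqref{eq:control_slack-4}. Only three of the four $(\bar{\alpha}_i,\underline{\alpha}_i)$ combinations are realizable: the combination $(0,0)$ would simultaneously require $\hat{A}_i z(k)>\bar{z}_i$ and $\hat{A}_i z(k)<\underline{z}_i$, impossible under $\underline{z}_i\le\bar{z}_i$. In the interior case $(1,1)$, equation \eqref{eq:control_slack-1} collapses to $z_i(k+1)=\hat{A}_i z(k)\in[\underline{z}_i,\bar{z}_i]$ and \eqref{eq:control_slack-2} gives $\Delta_i(k)=\bar{z}_i-\underline{z}_i$. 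The clipping cases $(0,1)$ and $(1,0)$ yield $z_i(k+1)=\bar{z}_i$ and $z_i(k+1)=\underline{z}_i$ respectively, and in each \eqref{eq:control_slack-2} produces a $\Delta_i(k)$ that exceeds $\bar{z}_i-\underline{z}_i$ by exactly the amount the argument overshoots the box, so \eqref{eq:equi_alpha-2} and \eqref{eq:equi_alpha-3} both hold.

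For the ``$\Leftarrow$'' direction, I enumerate the four binary pairs permitted by \eqref{eq:equi_alpha-1}. The option $(0,0)$ is ruled out because \eqref{eq:control_slack-2} then gives $\Delta_i(k)=0$, contradicting $\Delta_i(k)\ge\bar{z}_i-\underline{z}_i>0$. In case $(1,1)$, \eqref{eq:control_slack-1} reduces to $z_i(k+1)=\hat{A}_i z(k)$, and \eqref{eq:equi_alpha-3} then forces $\hat{A}_i z(k)\in[\underline{z}_i,\bar{z}_i]$, which matches the selecting branches of both \eqref{eq:control_slack-3} and \eqref{eq:control_slack-4}. In case $(1,0)$, \eqref{eq:control_slack-2} reads $\Delta_i(k)=\bar{z}_i-\hat{A}_i z(k)$, and \eqref{eq:equi_alpha-2} then forces $\hat{A}_i z(k)\le\underline{z}_i$, consistent with the clipping branch of \eqref{eq:control_slack-4}; case $(0,1)$ is symmetric.

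The delicate point I anticipate is the boundary behavior at $\hat{A}_i z(k)\in\{\underline{z}_i,\bar{z}_i\}$: at such points both an interior $(1,1)$ assignment and one of the clipping assignments satisfy \eqref{eq:equi_alpha}, whereas \eqref{eq:control_slack-3}, \eqref{eq:control_slack-4} prescribe a unique value. Since the resulting $z_i(k+1)$ coincides across all feasible slack choices, the cleanest reading of the lemma is as an equivalence of admissible $z$-trajectories (which is what the verifier actually checks) rather than of full slack tuples. Making this reading explicit is the step I would spend the most care on, because it underpins how the online stage of the two-stage framework can hand a well-defined witness to the offline zk-STARK stage.
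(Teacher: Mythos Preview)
Your proposal is correct and uses essentially the same case-by-case analysis as the paper; the only difference is organizational---the paper splits cases by the position of $\hat{A}_i z(k)$ relative to $[\underline{z}_i,\bar{z}_i]$ and derives the admissible $(\bar{\alpha}_i,\underline{\alpha}_i)$, whereas you split by the four binary pairs and derive constraints on $\hat{A}_i z(k)$. Your explicit treatment of both directions and of the boundary non-uniqueness at $\hat{A}_i z(k)\in\{\underline{z}_i,\bar{z}_i\}$ is more careful than the paper's own proof, which tacitly assumes uniqueness in the interior case; your reading of the lemma as an equivalence of $z$-trajectories is the right resolution and is consistent with how the paper actually uses the result downstream.
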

\begin{proof}We prove the lemma by case-by-case analyses:\vspace{-3mm}
\begin{itemize}
    \item Case $\underline{z}_i \leq \hat{A}_i z(k) \leq \bar{z}_i$: the only $\bar{\alpha}_i$ and $\underline{\alpha}_i$ such that~\eqref{eq:equi_alpha-2} holds is $\bar{\alpha}_i = \underline{\alpha}_i= 1$. The selection of $\bar{\alpha}_i$ and $\underline{\alpha}_i$ satisfies~\eqref{eq:equi_alpha-1} and \eqref{eq:equi_alpha-3} as well. The solution concurs with~\eqref{eq:control_slack-3} and~\eqref{eq:control_slack-4}.
    
    \item Case $\hat{A}_i z(k) > \bar{z}_i$:   $\underline{\alpha}_i$ must be $1$ to ensure~\eqref{eq:equi_alpha-1} and \eqref{eq:equi_alpha-2} hold; $\bar{\alpha}_i$ must be $0$,~\eqref{eq:equi_alpha-3} is violated otherwise. The solution $\underline{\alpha}_i=1$ and $\bar{\alpha}_i=0$ concurs with~\eqref{eq:control_slack-3} and~\eqref{eq:control_slack-4}.
    
    \item Case $\hat{A}_i z(k) < \underline{z}_i$: the arguments are similar to the one for the previous case. 
\end{itemize}
\vspace{-5mm}
% Lemma~\ref{lem:equi_alpha} follows by straightforward case-by-case analysis. 
\end{proof}
To this end, we have two equivalent formulations of~\eqref{eq:control}: The first reformulation is equation~\eqref{eq:control_slack} and the other is~\eqref{eq:control_slack-1}, \eqref{eq:control_slack-2}~and~\eqref{eq:equi_alpha}. The reason doing all these reformulations is that it is relatively easy to build a low-complexity verification method on the last reformulation. The prover/controller is required to compute more computationally heavy dynamics~\eqref{eq:control_slack} in place of~\eqref{eq:control}; and the verifier verifies the accurate execution of~\eqref{eq:control_slack} by checking~\eqref{eq:control_slack-1}, \eqref{eq:control_slack-2}, and~\eqref{eq:equi_alpha} without repeating most computations in~\eqref{eq:control_slack}.    
\subsection{Two-stage verification framework}

The two-stage verification framework has online and offline stages of computation and verification. In the online stage, for every iteration $k$, the controller/prover computes~\eqref{eq:control_slack} to generate the variables for the next time step. The verifier checks whether~\eqref{eq:equi_alpha-2} and~\eqref{eq:equi_alpha-3} are satisfied. Checking~\eqref{eq:equi_alpha-2} and~\eqref{eq:equi_alpha-3} should not pose a computational burden on the verifier even in the real-time setups. If~\eqref{eq:equi_alpha-2} and~\eqref{eq:equi_alpha-3} hold, then all $z(k+1)$, $\Delta(k)$, $\bar{\alpha}(k)$. $\underline{\alpha}(k)$ are accepted, and recorded on the execution trace. Otherwise, the results from the controller/prover are not accepted and the prover is requested to redo the computations for iteration $k$ until the results are valid. Table~\ref{tab:exe_trace} illustrates how the execution trace expands.
\begin{table*}[h]
    \centering
\begin{tabular}{ c  c c c | c c c c | c c c c }
 \multicolumn{4}{c|}{Execution trace $(k=0)$}  & \multicolumn{4}{c|}{Execution trace $(k=1)$} & \multicolumn{4}{c}{Execution trace $(k=N)$} \\  \hline
  $z(0)$ & - & - & -  & $z(0)$ & $\bar{\alpha}(0)$ & $\underline{\alpha}(0)$ & $\Delta(0)$ & $z(0)$ & $\bar{\alpha}(0)$ & $\underline{\alpha}(0)$ & $\Delta(0)$ \\
  - & - & - & -       & $z(1)$ & - & - & - & $z(1)$ & $\bar{\alpha}(1)$ & $\underline{\alpha}(1)$ & $\Delta(1)$ \\
  - & - & - & -       & - & - & - & - & \vdots & \vdots & \vdots & \vdots \\
  - & - & - & -       & - & - & - & - & $z(N-1)$ & $\bar{\alpha}(N-1)$ & $\underline{\alpha}(N-1)$ & $\Delta(N-1)$  \\
  - & - & - & -       & - & - & - & -  & $z(N)$ & - & - & - 
\end{tabular}
    \caption{An example of how the execution trace expands along with increasing numbers of iterations.}
    \label{tab:exe_trace}
\end{table*}

In the online stage, the verifier verifies only~\eqref{eq:equi_alpha-2} and~\eqref{eq:equi_alpha-3} and leaves the verification of~\eqref{eq:control_slack-1}, \eqref{eq:control_slack-2}, and~\eqref{eq:equi_alpha-1} to the offline stage. Note that we can also put the verification of~\eqref{eq:equi_alpha-1} online instead of offline because those involve only some simple computations. Here, we put it in the offline stage only because the low-complexity verification framework can incorporate the verification of~\eqref{eq:equi_alpha-1} as well. The offline verification assumes that every variable only takes values in a finite field and leverages some properties of finite fields. The assumption is technically not true given that $z$ and $\Delta$ are vectors taking real values; however, given that a real number point is stored in a fixed number of bits of computer memory, realistically, all the values of $z$ and $\Delta$ are still in a finite field $\Fc_{2^M}$, where $M$ is the number of bits for a floating number. The assumption is therefore valid, and we will assume that holds true in the rest of the paper. 

One main reason for a low-complexity verification on the verifier end is that the prover end provides some assists. When generating the execution trace, the prover is required to find polynomials such that:
\begingroup
\allowdisplaybreaks
\begin{subequations}\label{eq:poly_variables}
\begin{align}
    & f_{i,z}(g^k) = z_i(k), \quad \forall k\in\{0,\cdots,N\}, \quad \forall i \in 1,\cdots,n \\
    & f_{i,\bar{\alpha}}(g^k) =  \bar{\alpha}_i(k), \quad \forall k\in\{0,\cdots,N-1\}, \quad \forall i \in 1,\cdots,n \\
    & f_{i,\underline{\alpha}}(g^k) =  \underline{\alpha}_i(k),  \quad \forall k\in\{0,\cdots,N-1\}, \quad \forall i \in 1,\cdots,n  \\
    & f_{i,\Delta}(g^k) =  \Delta_i(k), \quad \forall k\in\{0,\cdots,N-1\}, \quad \forall i \in 1,\cdots,n
\end{align}
\end{subequations}
\endgroup
where $g$ is a generator of a cyclic subgroup, $\Sc$, of a multiplicative group of $\Fc_q$, $\Fc_q^*$, where $q$ is a prime power. The size of the cyclic subgroup is selected such that $|\Sc| = N+1$. All $f_{i,\bar{\alpha}}$, $ f_{i,\underline{\alpha}}$, and $f_{i,\Delta}$ are the polynomials with degree at most $N$; and  $f_{i,z}$ has the degree at most $N+1$. The degrees of those polynomials are denoted, respectively, as $D_{i,z}$, $D_{i,\bar{\alpha}}$, $ D_{i,\underline{\alpha}}$ and $D_{i,\Delta}$. Substituting the variables of~\eqref{eq:control_slack-1}, \eqref{eq:control_slack-2},  and~\eqref{eq:equi_alpha-1} by the polynomials~\eqref{eq:poly_variables} leads to some polynomial equations: $\forall k\in\{0,\cdots,N-1\}$ and $\forall i = 1,\cdots,n,$:
\begingroup
\allowdisplaybreaks
\begin{subequations}\label{eq:poly_offline}
\begin{align}
    &f_{i,z}(g^{k+1}) - f_{i,\underline{\alpha}}(g^k)f_{i,\bar{\alpha}}(g^k) \Big(\sum_{j=1}^n\hat{A}_{ij} f_{j,z}(g^k) \Big)  \label{eq:poly_offline-1} \\ \nonumber &\hspace{10mm} - (1 - f_{i,\bar{\alpha}}(g^k) )\bar{z}_i(k) - (1 - f_{i,\underline{\alpha}}(g^k) )\underline{z}_i(k) = 0, \\
    &f_{i,\Delta}(g^k) - f_{i,\bar{\alpha}}(g^k) \cdot \Big(\bar{z}_i - \sum_{j=1}^n\hat{A}_{ij} f_{j,z}(g^k) \Big) \label{eq:poly_offline-2} \\ \nonumber & \hspace{10mm}- f_{i,\underline{\alpha}}(g^k) \cdot \Big(\sum_{j=1}^n\hat{A}_{ij} f_{j,z}(g^k) - \underline{z}_i\Big) = 0,  \\
    &f_{i,\bar{\alpha}}(g^k)\Big(1-f_{i,\bar{\alpha}}(g^k)\Big) = 0, \label{eq:poly_offline-3} \\
    &f_{i,\underline{\alpha}}(g^k)\Big(1-f_{i,\underline{\alpha}}(g^k)\Big) = 0. \label{eq:poly_offline-4}
\end{align}
\end{subequations}
\endgroup
The polynomial constraints associated with~\eqref{eq:control_slack-1}, \eqref{eq:control_slack-2},  and~\eqref{eq:equi_alpha-1} for all $i = 1,\cdots,n$ are next given as:
\begin{subequations}\label{eq:poly}
\begin{align}
    &f_{i,z}(g^{0}) = z_{ini},%\quad f_{i,\underline{\alpha}}(g^{0}) = \underline{\alpha}_{i,ini}, \quad f_{i,\bar{\alpha}}(g^{0}) = \bar{\alpha}_{i,ini},\quad f_{i,\Delta}(g^{0}) = \Delta_{i,ini}
    \label{eq:poly-1} \\
    &\eqref{eq:poly_offline} \text{ holds}, \quad \forall k\in\{0,\cdots,N-1\}. \label{eq:poly-2}
\end{align}
\end{subequations}
Equation~\eqref{eq:poly-1} is about the initialization,  and~\eqref{eq:poly-2} is equivalent to~\eqref{eq:control_slack-1}, \eqref{eq:control_slack-2}, and~\eqref{eq:equi_alpha-1}. The verifier does not straight check whether~\eqref{eq:poly}  holds because the computations involved for the verification remain high. The verifier further requests the prover to substitute $g^k$ in~\eqref{eq:poly_offline} by a generic variable $x$, ($g^{k+1} = g\cdot x$) as shown in~\eqref{eq:poly_offline_p}, and construct the composition polynomials shown in~\eqref{eq:composite_poly}:
\begingroup
\allowdisplaybreaks
\begin{subequations}\label{eq:poly_offline_p}
\begin{align}
    &f_{i,z}(g x) - f_{i,\underline{\alpha}}(x)f_{i,\bar{\alpha}}(x) \Big(\sum_{j=1}^n\hat{A}_{ij} f_{j,z}(x) \Big)  \label{eq:poly_offline_p-1} \\ \nonumber &\hspace{10mm} - (1 - f_{i,\bar{\alpha}}(x) )\bar{z}_i(k) - (1 - f_{i,\underline{\alpha}}(x) )\underline{z}_i(k), \\
    &f_{i,\Delta}(x) - f_{i,\bar{\alpha}}(x) \cdot \Big(\bar{z}_i - \sum_{j=1}^n\hat{A}_{ij} f_{j,z}(x) \Big) \label{eq:poly_offline_p-2} \\ \nonumber & \hspace{10mm}- f_{i,\underline{\alpha}}(x) \cdot \Big(\sum_{j=1}^n\hat{A}_{ij} f_{j,z}(x) - \underline{z}_i\Big),  \\
    &f_{i,\bar{\alpha}}(x)\Big(1-f_{i,\bar{\alpha}}(x)\Big), \label{eq:poly_offline_p-3} \\
    &f_{i,\underline{\alpha}}(x)\Big(1-f_{i,\underline{\alpha}}(x)\Big). \label{eq:poly_offline_p-4}
\end{align}
\end{subequations}
\endgroup
\begingroup
\allowdisplaybreaks
\begin{subequations}\label{eq:composite_poly}
    \begin{align}
        %&q_{i,0}(x) = f_{i,z}(g^{0}) - z_{ini}, \quad \forall i \in 1,\cdots,n, \quad \text{deg}(q_{i,0}) = 0, \label{eq:composite_poly-0} \\
        &q_{i,1}(x):= \frac{\eqref{eq:poly_offline_p-1}}{\Pi_{j=0}^{N-1}(x - g^j)}, \quad \forall i \in 1,\cdots,n, \label{eq:composite_poly-1} \\
        &q_{i,2}(x):= \frac{\eqref{eq:poly_offline_p-2}}{\Pi_{j=0}^{N-1}(x - g^j)}, \quad \forall i \in 1,\cdots,n, \label{eq:composite_poly-2} \\
        &q_{i,3}(x):= \frac{\eqref{eq:poly_offline_p-3}}{\Pi_{j=0}^{N-1}(x - g^j)}, \quad \forall i \in 1,\cdots,n, \label{eq:composite_poly-3} \\
        &q_{i,4}(x):= \frac{\eqref{eq:poly_offline_p-4}}{\Pi_{j=0}^{N-1}(x - g^j)}, \quad \forall i \in 1,\cdots,n. \label{eq:composite_poly-4} 
    \end{align}
\end{subequations}
\endgroup
The maximal possible degrees of each of the composition polynomials, $q_{i,l}$, $i \in\{1,\cdots,n\}$ and $l \in\{1,\cdots,4\}$, are given as:
\begingroup
\allowdisplaybreaks
\begin{subequations}\label{eq:composite_poly_deg}
    \begin{align}
        %&q_{i,0}(x) = f_{i,z}(g^{0}) - z_{ini}, \quad \forall i \in 1,\cdots,n, \quad \text{deg}(q_{i,0}) = 0, \label{eq:composite_poly-0} \\
        &\text{deg}(q_{i,1}) \leq \max\{D_{i,z} + D_{i,\bar{\alpha}} +  D_{i,\underline{\alpha}} - N, 0 \}, \label{eq:composite_poly_deg-1} \\ 
        &\text{deg}(q_{i,2}) \leq \max\{D_{i,z}+\max\{D_{i,\bar{\alpha}},D_{i,\underline{\alpha}}\} - N, 0\}, \label{eq:composite_poly_deg-2} \\
        &\text{deg}(q_{i,3}) \leq \max\{2D_{i,\bar{\alpha}} - N,0\}, \label{eq:composite_poly_deg-3} \\
        &\text{deg}(q_{i,4}) \leq \max\{2D_{i,\underline{\alpha}}  -N, 0\}. \label{eq:composite_poly_deg-4} 
    \end{align}
\end{subequations}
\endgroup
The maximal possible degrees are straight derived from the degree of the individual polynomials $f_{i,z}$, $f_{i,\bar{\alpha}}$, $ f_{i,\underline{\alpha}}$, and $f_{i,\Delta}$ ($D_{i,z}$, $D_{i,\bar{\alpha}}$, $ D_{i,\underline{\alpha}}$ and $D_{i,\Delta}$). Note that each of \eqref{eq:composite_poly-1}-\eqref{eq:composite_poly-4} is a polynomial only if the denominator divide the numerator, which is the case only if all the numbers of the execution trace comply with~\eqref{eq:control_slack-1}-\eqref{eq:control_slack-2} and~\eqref{eq:equi_alpha-1}. This implies that to verify that~\eqref{eq:poly-2} hold, the verifier only needs to check whether there exists a polynomial with at most $\text{deg}(q_{i,l})$ that matches $q_{i,l}$ for all $x\in\Fc^*_q\setminus\{g^0,\cdots,g^{N-1}\}$, $\forall i \in\{1,\cdots,n\}$, and $\forall l \in\{1,\cdots,4\}$. In fact, there is a low complexity degree testing method that leverages the fast Reed-Solomon interactive oracle proof of Proximity (FRI) to verify whether the upper bound of the degree holds true. If the bound does not hold, then the execution trace fails the verification test. Because the low-complexity degree testing method based on FRI is the main reason the whole reformulation from~\eqref{eq:control} to polynomial constraints~\eqref{eq:poly} and composition polynomials~\eqref{eq:composite_poly}, we will go through the rough idea of low-degree testing even though more detailed explanations are available in~\cite{ZKSTARK_blog4,ben2018fast}.

\subsection{Low-degree testing}
The verifier can test the degree of all the composition polynomials in~\eqref{eq:composite_poly} individually; however, to further simplify the verification process, those composition polynomials are usually combined in a single one for the verification in zk-STARK. The verifier randomly chooses $\gamma_{i,l}\in \Fc_q^*$ for all $i=\{1,\cdots,n\}$ and $l\in\{1,\cdots,4\}$ and requests that the prover constructs the composition polynomial in the following:
\begin{align}\label{eq:composite_poly_tot}
    &Q(x) = \sum_{i = 1}^n\sum_{l = 1}^4 \gamma_{i,l} \cdot q_{i,l}(x), \\ \nonumber &\hspace{5mm}\text{deg}(Q):= D_Q \leq \bar{D}_Q = \max_{i\leq n}(D_{i,z} + D_{i,\bar{\alpha}} +  D_{i,\underline{\alpha}}) - N.
\end{align}
If the composition polynomial has the degree bounded by $\bar{D}_Q$ as defined in~\eqref{eq:composite_poly_tot}, then it is highly probable that the execution trace is valid because if the execution trace is invalid, then~\eqref{eq:composite_poly_tot} is no longer a polynomial because the denominator does not divide the numerator for at least one composition polynomials in~\eqref{eq:composite_poly}. 

The idea of FRI is halving the degree of the target polynomial $Q(x)$, for each query from the verifier to the prover. Repeating the steps for $\lfloor \log_2 \bar{D}_Q \rfloor +1$ times leads to a constant polynomial, which can be translated to exponentially fewer numbers of queries compared to $\bar{D}_Q$. The FRI protocol engages the prover to achieve exponentially fewer queries. First, the prover writes the composition polynomial as:
\begin{align*}
    Q(x) = Q_e(x^2) + x Q_o(x^2),
\end{align*}
where $Q_e$ and $Q_o$ are constructed, respectively, by the coefficients of even and odd powers of $x$ in $Q$. After receiving a $\beta_0\in\Fc_q^*$ randomly picked by the verifier, the prover commits a new polynomial, shown in the following:
\begin{align}\label{eq:halve_deg}
    Q_1(x) = Q_e(x) + \beta_0 Q_o(x).
\end{align}
The new polynomial $Q_1$ has a degree no bigger than $\frac{\bar{D}_Q}{2}$. The verifier repeats this type of query for $\lfloor \log_2 \bar{D}_Q \rfloor +1$ times and checks whether the degree is $0$ (a constant polynomial). If not, then the execution trace generated from the prover fails the low-degree testing.  The procedure described above is called commit phase in the context of FRI such that the prover commits to a sequence of polynomials, $Q$, $Q_1$,$\cdots$,$Q_{\lfloor \log_2 \bar{D}_Q \rfloor + 1}$, with degree halving through each step by constant $\beta_0,\cdots,\beta_{\lfloor \log_2 \bar{D}_Q \rfloor}$ sent by the verifier. There is an additional phase, known as the query phase in FRI, such that the verifier can ensure that the prover did not cheat in the commit phase. The idea is that the verifier samples a random $x_s\in\Fc^*_q\setminus\{g^0,\cdots,g^{N-1}\}$ and queries the values of $Q(x_s)$, $Q(-x_s)$ and $Q_1(x_s^2)$. The verifier knows the following equations should hold if the prover did not cheat:
\begin{subequations}\label{eq:query_phase}
\begin{align}
    Q(x_s) &=  Q_e(x_s^2) + x_s Q_o(x_s^2), \label{eq:query_phase-1}\\
    Q(-x_s) &= Q_e(x_s^2) - x_s Q_o(x_s^2), \label{eq:query_phase-2}\\
    Q_1(x_s^2) &= Q_e(x_s^2) + \beta_0 Q_o(x_s^2). \label{eq:query_phase-3}
\end{align}
\end{subequations}
Using~\eqref{eq:query_phase}, the verifier can compute $Q_1(x_s^2)$ with the queried $Q(x_s)$ and $Q(-x_s)$. The computed $Q_1(x_s^2)$ should match the queried $Q_1(x_s^2)$. The queries go further down to the lowest-degree polynomial $Q_{\lfloor \log_2 \bar{D}_Q \rfloor + 1}$. With multiple $x_s$ sample points, the verifier can conclude that with high probability the prover did not cheat in the commit phase.

\section{Numerical Studies}\label{sec:num}
We use a very simple test case to validate the reformulation and verification methods explained in the last section because our goal here is only a proof of concept. Instead of using a large finite field,  $\Fc_{2^M}$ ($M\geq 32$), to accurately capture the real state variables $z$ and $\Delta$ in~\eqref{eq:control_slack}, we assume that all the variables in~\eqref{eq:control_slack} only take values in a field, $\Fc_{331}$. The specific initialization of the dynamical system is shown in the following:
\begin{align*}
    \hat{A} = \begin{bmatrix} 1 & 0 \\ -1 & 1\end{bmatrix}, \quad \bar{z} = \begin{bmatrix}100 \\ 100\end{bmatrix}, \quad \underline{z} = \begin{bmatrix}0 \\ 40\end{bmatrix}, \quad z(0) = \begin{bmatrix}3 \\ 100\end{bmatrix}.
\end{align*}
We consider a scenario that the prover fully follows~\eqref{eq:control_slack}, and then we show how it passes through all the checkpoints of the verification process. First, the prover successfully generates the execution trace with length $N+1$ in the online stage because both~\eqref{eq:equi_alpha-2} and~\eqref{eq:equi_alpha-3} are satisfied for every iteration. The prover next generates and commits the polynomials such that~\eqref{eq:poly_variables} holds, which requires defining a cyclic subgroup of $\Fc_{331}^*$. Because $N=29$ in the simulation setup, the size of the cyclic subgroup, $\Sc$, is $N+1$, i.e., $|\Sc| = N+1 = 30$. Such a cyclic subgroup is well defined with the generator $g=2$ and $\Sc = \{g^0,g^1,\cdots, g^N\}$ $=\{1,2,4,8,16,32,64,128,256,181,31,62,124,248,165,$ $ 330,329,327,323,315,299,267,203,75,150,300,269,207,$ $83,166\}$. 
The prover finds that many polynomials are actually constant polynomials (or have degree 0), as shown in Table~\ref{tab:degree}. The reason is because every variable stays constant except $z_2$, $\underline{\alpha}_2$, and $\Delta_2$. With the lower bound of $z_2$ becoming active from iteration 20, the evolution of variables $z_2$, $\underline{\alpha}_2$ ,and $\Delta_2$ is nontrivial and the prover finds the degrees close to the upper bound.
\begin{table}[h]
    \centering
    \begin{tabular}{|c|c|c|c|c|c|c|c|c|}\hline
        &$f_{1,z}$ & $f_{2,z}$ & $f_{1,\Delta}$ & $f_{2,\Delta}$ & 
        $f_{1,\underline{\alpha}}$ & $f_{2,\underline{\alpha}}$ & 
        $f_{1,\bar{\alpha}}$ & $f_{2,\bar{\alpha}}$
        \\ \hline
        Degree & 0 & 29 & 0 & 28 & 0 & 28 & 0 & 0 \\ \hline
    \end{tabular}
    \caption{The degrees of the polynomials.}
    \label{tab:degree}
\end{table}
With those polynomials defined, the prover then commits the composition polynomials with the degree shown in Table~\ref{tab:degree_2}. Here is a checkpoint for the verifier: The verifier needs to ensure that all the $q-$composition polynomials are actually generates by~\eqref{eq:composite_poly} instead of grabbing some random low-degree polynomials to pass the low-degree testing in the next step. The verifier queries the outputs of the $f$-polynomials evaluated at some random sample points in $\Fc^*_{331}$. Using those outputs, the verifier can easily compute the outputs of all the $q$-composition polynomials evaluated at those random sample points. The $q$-composition polynomials committed by the prover should give the same outputs as those computed by the verifier to pass the test. The details why computing the denominators of~\eqref{eq:composite_poly} does not burden the verifier are in~\cite{ZKSTARK_blog3}.
\begin{table}[h]
    \centering
    \begin{tabular}{|c|c|c|c|c|c|c|c|c|}\hline
         & $q_{1,1}$ & $q_{2,1}$ & $q_{1,2}$ & $q_{2,2}$ & $q_{1,3}$ & $q_{2,3}$ & $q_{1,4}$ &$q_{2,4}$\\ \hline
        Degree & 0 & 28 & 0 & 28 & 0 & 27 & 0 & 0 \\ \hline
    \end{tabular}
    \caption{The degrees of the composition polynomials.}
    \label{tab:degree_2}
\end{table}
\begin{table}[h]
    \centering
    \begin{tabular}{|c|c|c|c|c|c|c|c|}\hline
         $\gamma_{1,1}$ & $\gamma_{2,1}$ & $\gamma_{1,2}$ & $\gamma_{2,2}$ & $\gamma_{1,3}$ & $\gamma_{2,3}$ & $\gamma_{1,4}$ &$\gamma_{2,4}$\\ \hline
        261 & 308 & 225 & 47 & 236 & 41 & 43 &  212\\ \hline
    \end{tabular}
    \caption{The $\gamma$ values randomly sampled by the verifier.}
    \label{tab:gamma}
\end{table}
\begin{table*}[h]
    \centering
    \begin{tabular}{|c|c|c|c|c|c|c|} \hline
        $x_s$ &  & $Q_1(x_s^2)$ & $Q_2(x_s^4)$ & $Q_3(x_s^8)$ & $Q_4(x_s^{16})$ & $Q_5(x_s^{32})$ \\ \hline
        \multirow{2}{*}{87} & Computed value from~\eqref{eq:query_phase} & 128   & 22 & 101 & 39 & 229 \\ \cline{2-7}
         & Queried value & 128   & 22 & 101 & 39 & 229 \\ \hline
        \multirow{2}{*}{291} & Computed value from~\eqref{eq:query_phase} & 324 & 35 & 219 & 110 & 229 \\ \cline{2-7}
         & Queried value & 324 & 35 & 219 & 110 & 229 \\ \hline
    \end{tabular}
    \caption{The results of the query phase. The verifier makes two sample points: $x_s=87$ and $x_s = 291$. The computed values from~\eqref{eq:query_phase} match the queried values throughout all $Q_i$, $i=1,\cdots,5$. }
    \label{tab:query_phase}
\end{table*}

The last stage is the low-degree testing. The verifier samples some random weights for each $q$-composition polynomial as shown in Table~\ref{tab:gamma}. The coefficients of $x^i$ for $i=0,\cdots,28$ for the resulting combined composition polynomial, $Q$, are listed in the following:
\begin{align*}%\label{eq:Q_value}
&Q: 72, 260, 273, 61, 25, 37, 225, 18, 311, 255, 292, 157, 83, 151,  \\ 
&\; 31,  172, 203, 244, 39, 65, 136, 317, 91, 84, 238, 325, 94, 24, 275 
\end{align*}
The prover next commits the sequence of the polynomials halving the degree with~\eqref{eq:halve_deg} and randomly sampled $\beta_0,\cdots,\beta_4$ from the verifier. The coefficients of $x^i$ ($i$ from $0$ to the highest power of $x$) of those polynomials are shown in the following: 
\begin{align*}
Q_1 &: 85, 94, 242, 259, 241, 184, 74, 172, 149, 125, 36, 29, 6, 29,  \\ &\quad 275, (\text{ deg}(Q_1) = 14, \quad \beta_0 = 149)   \\
Q_2 &: 18, 75, 300, 50, 324, 209, 179, 275, \; (\text{deg}(Q_2) = 7, \; \beta_1 = 200) \\
Q_3 &: 88, 126, 166, 215, \quad (\text{deg}(Q_3) = 3, \quad \beta_2 = 23) \\
Q_4 &: 204, 117, \quad (\text{deg}(Q_4) = 1, \quad \beta_3 = 106)  \\
Q_5 &: 229, \quad (\text{deg}(Q_5) = 0, \quad \beta_4 = 252).
\end{align*}
The verifier knows that if $Q$ has the degree bounded by 28 (according to Table~\ref{tab:gamma}), the degree of the combined composition polynomial should become zero after five halving actions. In this numerical example, the prover has the last composition polynomial, $Q_5 = 229$, as a constant polynomial. Because the prover does not cheat in our simulation, it passes the tests in the query phase,  as shown in Table~\ref{tab:query_phase}. No matter which sample point, $x_s$, the verifier selected, all the outputs of $Q_5$ are the same constant. This implies that $Q_5$ is indeed a constant polynomial, so the original combined composition polynomial, $Q$, passes the low degree testing. 

%\begin{table}[H]
%    \centering
%    \begin{tabular}{|c|c|c|c|c|}\hline
%         $\beta_{0}$ & $\beta_{1}$ & $\beta_{2}$ & $\beta_{3}$ & $\beta_{4}$ \\ \hline
%        149 & 200 & 23 & 106 & 252\\ \hline
%    \end{tabular}
%    \caption{The $\beta$ values randomly sampled by the verifier.}
%    \label{tab:beta}
%\end{table}

\section{Conclusion}
In this paper, we proposed a two-stage verification framework that leverages zk-STARK for a more secure and reliable implementation of projected gradient controllers. The controller/prover takes additional offline computations to generate a proof of accurate computation - computational integrity - to convince the verifier that the designated projected gradient control dynamics are accurately executed for generating the sequence of control. The proof-of-concept approach presented here can be leveraged by power grid control systems such as Advanced Distribution Management Systems (ADMS) and Distributed Energy Resource Management Systems (DERMS) to implement an inherently secure power grid controls. Our future work will include more engaging numerical studies with a large finite field to further validate the capability of the proposed framework, along with a lab based assessment and demonstration of the proposed approach. We will also look into other more general controller dynamics and improve the two-stage verification framework to accommodate those scenarios.

%\section*{Acknowledgments}
%This work was authored in part by the National Renewable Energy Laboratory (NREL), operated by Alliance for Sustainable Energy, LLC, for the U.S. Department of Energy (DOE) under Contract No. DE-AC36-08GO28308. This work was supported by the Laboratory Directed Research and Development (LDRD) Program at NREL. The views expressed in the article do not necessarily represent the views of the DOE or the U.S. Government. The U.S. Government retains and the publisher, by accepting the article for publication, acknowledges that the U.S. Government retains a nonexclusive, paid-up, irrevocable, worldwide license to publish or reproduce the published form of this work, or allow others to do so, for U.S. Government purposes.

\bibliographystyle{IEEEtran}
\bibliography{acmart}

\end{document}